\documentclass{amsart}

\usepackage{amssymb, amsmath}
\usepackage{graphicx}

\usepackage[nameinlink,capitalize]{cleveref} 
\crefname{equation}{}{}  % equations are referenced just as (20)
\crefname{figure}{Figure}{Figures}

\theoremstyle{definition}
\newtheorem{thm}{Theorem}

\newtheorem{lemma}[thm]{Lemma}
\newtheorem{cor}[thm]{Corollary}
\newtheorem{definition}[thm]{Definition}

\title[Combinatorial causal set d'Alembertian]{Combinatorial interpretation of the coefficients of the causal set d'Alembertian}
\author{Karen Yeats}
\address{Department of Combinatorics and Optimization, University of Waterloo, Waterloo, ON, Canada, kayeats@uwaterloo.ca}
\thanks{The author is supported by an NSERC Discovery grant and by the Canada Research Chairs program.  The author is grateful to Max Planck Institute for Mathematics in Bonn for its hospitality and financial support. They also thank Rafael, Sorkin, Fay Dowker, Stav Zalel and Nick Olson-Harris for useful conversations.}

\begin{document}
\maketitle

\begin{abstract}
The causal set theory d'Alembertian has rational coefficients for which alternating expressions are  known.  Here, a combinatorial interpretation of these numbers is given.
\end{abstract}

\section{Introduction}

Causal set theory is an approach to quantum gravity where the causal relation between spacetime points and discreteness of spacetime are the two fundamental assumptions upon which one attempts to build everything else.  Consequently, in this approach, one works with locally finite posets (also known as \emph{causal sets}) as the spacetime where the partial order is the causal order in the spacetime, \cite{BLMS}. For some reviews see \cite{Scstreview, Yeverything, DSreview}.

To quickly set poset notation and language, a \emph{poset} $P$ is a set (by standard abuse of notation also called $P$), with a reflexive, antisymmetric, and
transitive relation which we'll write $\leq$.  For $a, b\in P$ we write $a<b$ if $a\leq b$ and $a\neq b$.  We say $b$ \emph{covers} $a$ if $a<b$ but there is no $c\in P$ such that $a<c<b$.  The \emph{interval} from $a$ to $b$ is $[a,b]=\{c\in P: a\leq c\leq b\}$.  Our intervals are by definition closed intervals, sometimes in causal set theory open intervals (not including $a$ and $b$) are used instead; it is straightforward to convert between the different conventions.  A poset is \emph{locally finite} if all its intervals are finite sets.  For more on posets see \cite{Trotter}.

Returning to the causal set context, while one ultimately would like to begin with causal sets alone, without reference to an underlying continuous spacetime, it is often useful to start with a Lorentzian manifold and choose points randomly on it according to a Poisson distribution, letting those points with the induced causal order be our causal set.  Here the probability to find $m$ poset elements in a volume $V$ is $(\rho V)^m e^{-\rho V}/m!$, where $\rho$ is a density parameter.  Instead of working with $\rho$ it is often convenient to work in terms of a length parameter, called the \emph{discreteness scale}, $\ell$ defined by $\rho = \ell^{-d}$.  Generating a causal set in this manner is called \emph{sprinkling}, see Section 3 of \cite{Scstreview}. 

One of the many operators which one has in the continuum case, which one would like an analogue of for causal sets, is the d'Alembertian or box operator, which in 4-dimensional Minkowski space is
\[
\square = \frac{1}{c^2}\frac{\partial^2}{\partial t^2} - \frac{\partial^2}{\partial x^2} - \frac{\partial^2}{\partial y^2} - \frac{\partial^2}{\partial z^2}.
\]
Generally speaking, when moving to the discrete situation, one would expect differential operators, as in the classical d'Alembertian, to become difference operators.  However, causal sets are only \emph{locally} finite posets, so we can have infinitely many nearest neighbours of any given point, that is for $a$ in the poset there can be infinitely many elements covering $a$ and there can be infinitely many elements that $a$ covers.  Consequently, there's no reason to expect that a calculation involving nearest neighbours would converge in an infinite but locally finite poset, nor, if we take finite approximations is there a reason to expect a calculation involving nearest neighbours in the approximations to converge in any reasonable limit.

However, Sorkin in \cite{Sdalembertian} observed that a particular alternating linear combination has the property that it approximates the d'Alembertian of 2-dimensional Minkowski space in the following sense.  Let $C$ be a causal set and let $\phi$ be a scalar field on $C$, which for us just means a function from $C$ to $\mathbb{C}$.  Define
\[
  B^{(2)}\phi(x) = \frac{1}{\ell^2}\left( -2\phi(x) + 4 \left(\sum_{y\in L_1(x)}\phi(y) - 2\sum_{y \in L_2(x)}\phi(y) + \sum_{y\in L_3(x)}\phi(y)\right)\right)
\]
where $L_i(x) = \{y\in C: |[y,x]| = i+1\}$.  Then letting $\langle B\phi(x)\rangle$ be the expectation with respect to Poisson sprinkling on 2-dimensional Minkowski space, Sorkin proves that
\[
\lim_{\rho\rightarrow \infty}\frac{1}{\sqrt{\rho}}\langle B\phi(x)\rangle = \square \phi.
\]
This result was generalized by Benincasa and Dowker to 4 dimensions \cite{BDdalembertian}, by Dowker and Glaser to general dimension  \cite{DGdalembertian} but without an explicit expression past $d=7$, and then finally Glaser in \cite{Gdalembertian} gave the following general form for the $d$-dimensional causal set d'Alembertian:
\[
B^{(d)}\phi(x) = \frac{1}{\ell^2} \left(\alpha_d \phi(x) + \beta_d \sum_{i=1}^{\lfloor \frac{d}{2} \rfloor + 2 }C_i^{(d)}\sum_{y\in L_i}\phi(y)\right)
\]
where
\begin{align*}
  \beta_d & = \begin{cases}
  \displaystyle\frac{2 \Gamma\left(\frac{d}{2}+2\right)\Gamma\left(\frac{d}{2}+1\right)}{\Gamma\left(\frac{2}{d}\right)\Gamma(d)}c^{\frac{2}{d}}_d & \text{for even $d$} \\
  \displaystyle\frac{d+1}{2^{d-1}\Gamma\left(\frac{2}{d}+1\right)}c^{\frac{2}{d}}_d & \text{for odd $d$}\end{cases} \\
  \alpha_d & = \begin{cases}
    \displaystyle\frac{-2c^{\frac{2}{d}}_d}{\Gamma\left(\frac{d+2}{d}\right)} & \text{for even $d$} \\
    \displaystyle\frac{-c^{\frac{2}{d}}_d}{\Gamma\left(\frac{d+2}{d}\right)} & \text{for odd $d$}\end{cases} \\
  C_i^{(d)} & = \begin{cases}
    \displaystyle\sum_{k=0}^{i-1}\binom{i-1}{k}(-1)^k \frac{\Gamma\left(\frac{d}{2}(k+1) + 2\right)}{\Gamma\left(\frac{d}{2}+2\right)\Gamma\left(1+\frac{dk}{2}\right)} & \text{for even $d$}\\
    \displaystyle\sum_{k=0}^{i-1}\binom{i-1}{k}(-1)^k \frac{\Gamma\left(\frac{d}{2}(k+1)+\frac{3}{2}\right)}{\Gamma\left(\frac{d+3}{2}\right)\Gamma\left(1+\frac{dk}{2}\right)} & \text{for odd $d$}
    \end{cases}
\end{align*}
and $c_d = S_{d-2}/(d(d-1)2^{d/2-1})$ with $S_{d-2}$ the volume of the $d-2$ dimensional unit sphere.

These $B^{(d)}$ when applied to sprinklings on $d$-dimensional Minkowski space likewise limit to the $d$-dimensional classical d'Alembertian \cite{Gdalembertian}.

The causal set d'Alembertian has generated substantial interest in the literature, including generalizations where the upper limit of the sum exceeds $\frac{d}{2}$ resulting in non-uniqueness \cite{ASSdalembertian, Bdalembertian}, results refining our understanding of the behaviour of the continuum limit \cite{BBDcntlimit, Jdiscreteness}, generalizing to higher curvature operators \cite{dBEP} as well as using it for building quantum field theories on causal sets \cite{GDFcstaqft} and contrasts with other ways of building partial derivatives and similar operators on causal sets \cite{Spdcst}.

These same coefficients also appear in the Benincasa-Dowker-Glaser (BDG) action in causal set theory \cite{BDdalembertian}.  This is because on a curved spacetime Benincasa, Dowker, and Glaser proved that $\lim_{\rho \rightarrow \infty} \langle B^{(d)}\phi(x)\rangle = \square\phi -\frac{1}{2}R(x)\phi(x)$  \cite{BDdalembertian, DGdalembertian, Gdalembertian} where $R(x)$ is the Ricci scalar of the continuous spacetime.  This gives a candidate for an analogue of the Ricci scalar for a causal set.  Applying this to the constant field and summing over all elements of $C$ then gives the analogue of the Einstein-Hilbert action for the causal set, which is the BDG action:
\[
  S^{(d)}(C) = -\alpha_d \ell^{d-2} \left( N + \frac{\beta_d}{\alpha_d}\sum_{i=1}^{\lfloor \frac{d}{2} \rfloor + 2}C_i^{(d)}N_i\right)
\]
where $N=|C|$ and $N_i$ is the number of intervals of size $i+1$ in $C$.\footnote{This differs from \cite{Gdalembertian} by an overall factor independent of $d$, involving the rationalized Planck length, which is not important for our purposes.}  The BDG action is particularly important in causal set theory; its boundary and fluctuation effects are interesting and may relate to the Everpresent $\Lambda$ \cite{Dboundary, MWboundary, DLLJboundary, MYZlambda, DNYlambda}, it allows us to define a path integral from whence we can investigate which posets dominate under this action \cite{LCsuppress, MSSsuppress, CCSsuppress, CCSsuppress2}, and is useful for random generation \cite{Gcstmc}.

\medskip

Glaser comments at the end of Section 2 of \cite{Gdalembertian}, that the expressions for the $C_i^{(d)}$, while fast to compute, remain aesthetically unsatisfying since they could not simplify the sum.  From an enumerative perspective, these sums cry out for a combinatorial explanation, one where the alternation can be realized as explicit cancellations. This is what the present paper accomplishes.  Hopefully the reader will find this aesthetically satisfying, even without non-alternating expressions for the counts.  Furthermore, this combinatorial interpretation of the coefficients is physically nice both in explaining the cancellations between terms and in that it does not need a different form in even and odd dimensions.

\section{A class of noncrossing partial chord diagrams}

We take a two phase approach to interpreting the $C_i^{(d)}$.  The first phase is to define objects which give an interpretation for the ratio of gamma factors that appears in the sum.  Furthermore, we want a unified interpretation that works for both the odd and even dimensions.  The second phase is to modify these objects (in a way that's counted by the explicit binomial coefficients in the sum) so that the alternating sum can be realized by explicit cancellations, thus giving a class of objects which are counted by the $C_i^{(d)}$ directly.

The reader should note that the objects we define are certainly not the only objects which would have this property, but these ones seem reasonable and do the job.

\subsection{Phase 1}

Our main objects of interest are a particular class of coloured partial non-crossing rooted chord diagrams.

\begin{definition}
  A \emph{partial rooted chord diagram} with $n$ chords on $m$ points is a set partition of $\{1, 2, \ldots, m\}$ where every part has size $1$ or $2$ and there are $n$ parts of size $2$.  The parts of size $2$ are called \emph{chords}.  A \emph{rooted chord diagram} with $n$ chords is a partial rooted chord diagram with $n$ chords on $2n$ points, or equivalently where no parts have size $1$.
\end{definition}

When it is useful to emphasize a rooted chord diagram is not a partial diagram we'll call it a \emph{complete} rooted chord diagram.

\begin{definition}
  Two chords $\{a,b\}$ and $\{c,d\}$ with $a<b$ and $c<d$ \emph{cross} if $a<c<b<d$ or $c<a<b<d$.  A chord diagram is \emph{noncrossing} if no two of its chords cross.
\end{definition}
We will draw our partial rooted chord diagrams with $m$ points on a circle, indicating point 1 with an $\times$ and taking the rest in counterclockwise order, and indicating the chords by arcs inside the circle, see \cref{fig partial chord diagram}.  Often rooted chord diagrams are drawn with the points $\{1,2,\ldots, m\}$ from left to right on a line, but we will be making use of a notion of inside that is more visually appealing in the circle representation.

Call a point \emph{bare} if it does not have any chord on it, or equivalently if it is a part of size 1 in the set partition.

\begin{figure}
  \includegraphics{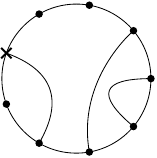}
  \caption{A partial rooted chord diagram.  Here there are three chords and nine points.  Point 1 is marked with an $\times$ and the rest are labelled counterclockwise, so the chords, written as sets of their endpoints as in the definition are $\{1,3\}$, $\{4,7\}$, and $\{5,6\}$.  Points $2$, $8$ and $9$ are bare.  This diagram is noncrossing.}\label{fig partial chord diagram}
\end{figure}

Given a chord in a partial chord diagram, picking one end of the chord to be the \emph{first end} lets us define the \emph{inside} of the chord as the cyclic interval from immediately after the first end until immediately before the other end.  For example, in \cref{fig partial chord diagram} if we take the chord from 4 to 7 as having 4 for its first end, then the inside of that chord includes the points 5 and 6, and so the chord from 5 to 6 is inside the chord from 4 to 7 for this choice of first end.

\begin{definition}\label{def B def}
  Let $\mathcal{B}_{n,m}$ be the set of partial rooted chord diagrams with $n$ chords on $m$ points
  with the following additional information:
  \begin{itemize}
  \item each chord is coloured, blue, red, or black,
  \item each blue and red chord has a designated first end, 
  \end{itemize}
  and with the properties:
  \begin{itemize}
  \item the inside of each red or blue chord has every point in a black chord and no black chords exist which are not inside a red or a blue chord,
  \item the chord diagram is noncrossing.
  \end{itemize}
  Let
  \[
  \mathcal{B} = \bigcup_{\substack{m\geq 2n \\ n\geq 0}} \mathcal{B}_{n,m}.
  \]
\end{definition}

\begin{figure}
  \includegraphics{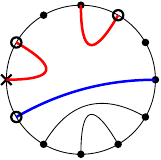}
  \caption{An example of an element of $\mathcal{B}_{5, 12}$.  The red and blue chords are also indicated with a heavier weight, and the first ends are circled.  This is an element of $\mathcal{B}_{5,12}$ because there are 5 chords total, 12 points total, the inside of the blue chord is entirely filled with black chords and in the trivial sense the same is true of the red chords since they have no points on their inside, and there are no other black chords.}\label{fig B object}
\end{figure}

For an example of the definition see \cref{fig B object}.

There are a few things to observe about this somewhat unwieldy definition.  First note that no red or blue chord can be in the inside of another red or blue chord.

Because of this, generically the information of which end of a red or blue chord is not extra information.  If there exists at least one point which is not in any chord, this point must be on the outside of all red and blue chords and so unambiguously defines the first end of each red and blue chord.  Similarly, if there exists a black chord, then this chord determines the inside of the red or blue chord it is contained by and is outside every other red or blue chord, hence defining the first end of each red and blue chord.  Finally, if there is more than one red or blue chord, then since none can be inside another, the dual tree they define must be a star with one leaf for each red or blue chord, and a star with at least two leaves has a well defined centre vertex.  This defines the outside of each red or blue chord and hence their first ends.

Consequently, the only case when the information of the first end cannot be deduced from the rest of the structure is the case when $n=1$ and $m=2$.  None the less, it seems easier to keep this information in the definition since this way the $n=1$ and $m=2$ term does not need tweaking to agree with the physical problem we aim to solve. Additionally, the first end will be a useful notion later, and so given $b\in \mathcal{B}_{n,m}$, define $F(b)\subseteq \{1, 2, \ldots, m\}$ be the set of first ends of $b$ and note that the first ends inherit an order from the order $\{1, 2, \ldots, m\}$.

\begin{lemma}\label{lem gen series}
  The generating function for $\mathcal{B}$ is
  \[
  B(x,y) = \sum_{\substack{m\geq 2n \\ n\geq 0}}\sum_{b\in \mathcal{B}_{n,m}}x^ny^m
   = \frac{\frac{y}{\sqrt{1-4xy^2}} + y^2(1+4x)}{1-y^2(1+4x)}.
  \]
\end{lemma}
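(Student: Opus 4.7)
The plan is to decompose diagrams in $\mathcal{B}$ by the role of the root point $1$, using an auxiliary "linear" class $\mathcal{L}$.

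First I would define $\mathcal{L}$ as the class of $\mathcal{B}$-diagrams in which every red or blue chord has its left linear endpoint as its first end, so that the inside is the contiguous linear arc between the two endpoints and no inside wraps around point $1$. Reading positions $1,2,\ldots,m$ in order, an $\mathcal{L}$-diagram becomes a sequence whose atoms are either bare points (weight $y$) or \emph{red/blue packages}: a red or blue chord ($2$ colour choices, weight $2xy^2$) together with its inside filled by a noncrossing perfect matching of black chords. Using the generating function $M(z)=\sum_{j\ge0}C_jz^j=(1-\sqrt{1-4z})/(2z)$ for noncrossing perfect matchings, a package weighs $2xy^2M(xy^2)=1-\sqrt{1-4xy^2}$, so by the sequence construction
\[
L(x,y)=\frac{1}{1-y-(1-\sqrt{1-4xy^2})}=\frac{1}{\sqrt{1-4xy^2}-y}.
\]

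Next I would split $\mathcal{B}$ by the role of point $1$. In \textbf{Case A}, point $1$ is bare; then no chord can have point $1$ in its inside (inside points must be black chord endpoints), which forces all first ends to be left linear endpoints, and deleting point $1$ gives a bijection with $\mathcal{L}$-diagrams on $m-1$ points, contributing $yL$. In \textbf{Case C}, point $1$ is an endpoint of a red/blue chord $\{1,b\}$; each of the two first-end choices makes one of the linear arcs $\{2,\ldots,b-1\}$ and $\{b+1,\ldots,m\}$ the inside (noncrossing black matching, GF $M(xy^2)$) and the other the outside (an $\mathcal{L}$-structure, GF $L$), giving $4xy^2M(xy^2)L$ after summing over $2$ colours and $2$ first ends. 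In \textbf{Case D}, point $1$ lies in the wrap-around inside of a unique red/blue chord $\{a,b\}$ with $a\ge2$ and first end $b$; for an inside of $2j$ black chords the big chord admits $2j$ cyclic positions (one for each inside position coinciding with point $1$), the inside contributes $C_j(xy^2)^j$, and the outside arc $\{a+1,\ldots,b-1\}$ is an $\mathcal{L}$-structure, giving
\[
\sum_{j\ge0}2j\cdot 2xy^2\cdot C_j(xy^2)^j\cdot L=4x^2y^4M'(xy^2)L,
\]
using $\sum_{j\ge0}2jC_jz^j=2zM'(z)$.

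Combining and applying the identity $M(z)+zM'(z)=1/\sqrt{1-4z}$ (derived by differentiating $M=1+zM^2$) yields
\[
B=L\bigl(y+4xy^2M(xy^2)+4x^2y^4M'(xy^2)\bigr)=\frac{y+4xy^2/\sqrt{1-4xy^2}}{\sqrt{1-4xy^2}-y}.
\]
Rationalising by multiplying numerator and denominator by $\sqrt{1-4xy^2}+y$ turns the denominator into $1-y^2(1+4x)$, and after collecting terms and using the cancellation $y(1-4xy^2)+4xy^3=y$ in the piece over the square root, the numerator becomes $y/\sqrt{1-4xy^2}+y^2(1+4x)$, matching the claimed expression.

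The main obstacles I expect are (i) justifying that the outer arc in Cases~C and~D is really an $\mathcal{L}$-structure, which requires arguing that no chord contained in the outer arc can have its first end chosen so that its inside wraps back to contain the big red/blue chord (this would violate the rule that no red/blue chord lies inside another); and (ii) proving uniqueness of the wrapping chord in Case~D, which uses the noncrossing condition together with the same no-nesting rule to rule out two chords simultaneously wrapping around point $1$.
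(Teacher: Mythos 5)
Your proof is correct and follows essentially the same route as the paper: both decompose an element of $\mathcal{B}$ as a rooted cyclic arrangement of bare points and red/blue ``packages'' (an indecomposable piece $2xy^2C(xy^2)$), split on whether the root is bare or lies in a package, and your Cases C and D are exactly the combinatorial unpacking of the paper's pointing operator $y\frac{d}{dy}$ applied to the package series (root on the outer chord versus root on an inner black chord), with your $L$ equal to the paper's sequence factor $1/(1-y-2xy^2C(xy^2))$. Your more explicit bookkeeping of the first-end choices in Case C is a nice touch, since it transparently handles the degenerate $n=1$, $m=2$ diagrams where the first end is genuinely extra information.
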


\begin{proof}
  This is a fairly standard generating function argument.  For an overview of generating function methods, see for example Part A of \cite{FSbook}.

  Noncrossing complete rooted chord diagrams are well known to be counted by the Catalan numbers, $\mathsf{Cat}_n = \frac{1}{n+1}\binom{2n}{n}$.  Let $C(x) = \sum_{n\geq 0} \mathsf{Cat}_n x^n$ be the generating series for the Catalan numbers, equivalently for noncrossing rooted chord diagrams with $x$ counting the number of chords.  Specifically, saying that $x$ is counting the number of chords means that the coefficient of $x^n$ in $C(x)$ is the number of noncrossing chord diagrams with $n$ chords.

  Then, since every chord has exactly two points, for complete chord diagrams, we can count with two variables, $x$ and $y$, where $x$ counts the number of chords and $y$ counts the number of points simply by substituting $xy^2$ for what was $x$ before.  Namely, $C(xy^2)$ counts noncrossing rooted chord diagrams with $x$ counting number of chords and $y$ counting number of points.  Adding a new outer chord around each noncrossing diagram multiplies the generating function by $xy^2$.  These are precisely the \emph{indecomposable} noncrossing rooted chord diagrams, where here indecomposable means the diagram is nonempty and is not a concatenation of 2 or more nonempty diagrams.  

  Elements of $\mathcal{B}$ are cyclic arrangements of indecomposable noncrossing rooted chord diagrams with the outer chord coloured red or blue (generating function $2xy^2C(xy^2)$) along with bare points (generating function $y$).  If the root is a bare point, then such objects are formed by the root and a sequence of the building blocks described above.  It is a standard fact of generating series that if $F$ is the generating series for some set of objects not including an empty object, then the generating series for sequences of such objects is given by $1/(1-F)$.  Thus the diagrams considered so far have generating function
  \[
    \frac{y}{1-y-2xy^2C(xy^2)}.
    \]
    Here the factor of $y$ in the numerator is for the bare root.  The $y$ is added to $2xy^2C(xy^2)$ in the denominator since we take the sum of the generating series of the two possible (disjoint) types of blocks, and then finally we apply the fact mentioned above about sequences.
    
  On the other hand, if the root is a point in one of the indecomposable pieces then we choose a root in that piece and the rest is a sequence of the building blocks.  Here we need one other standard fact about generating series, the fact that choosing a root in all possible ways corresponds to applying the operator $zd/dz$, where this operator is taken with respect to the variable, in this case $y$, which counts the feature we are choosing among, in this case the points. Thus the diagrams with root in one of the indecomposable pieces has generating function
  \[
    \frac{y\frac{d}{dy}2xy^2C(xy^2)}{1-y-2xy^2C(xy^2)}.
    \]
    Here the numerator gives the generating series for the indecomposable piece which contains the root by the discussion above, while the denominator is the sequence of the rest for the same reason as the earlier calculation.
    
  Adding these two pieces together, using the standard fact that $C(x)=(1-\sqrt{1-4x})/(2x)$, and rearranging, including rationalizing the denominator, we obtain the form of the generating function in the statement. 
\end{proof}

As is common in enumerative combinatorics, write $[x^ny^m]F(x,y)$ for the coefficient of $x^ny^m$ in $F(x,y)$.

\begin{lemma}\label{lem gamma part}
  Let $B(x,y)$ be as in \cref{lem gen series}, then
  \[
    [x^{\lfloor \frac{d}{2} \rfloor + 1}y^{2\lfloor \frac{d}{2}\rfloor +2 + dk}]B(x,y) = \frac{(2dk + 4\lfloor\frac{d}{2}\rfloor + 4)(2dk + 4\lfloor\frac{d}{2}\rfloor)\cdots (2dk+4)}{(\lfloor\frac{d}{2}\rfloor + 1)!}
  \]
  where subsequent factors in the numerator differ by 4, and this is the ratio of gamma factors in the $C_i^{(d)}$ after scaling by $2^{2\lfloor \frac{d}{2}\rfloor + 2}$, namely
  \[
[x^{\lfloor \frac{d}{2} \rfloor + 1}y^{2\lfloor \frac{d}{2}\rfloor +2 + dk}]B(x,y) = \begin{cases}
     \displaystyle 2^{d+2}\frac{\Gamma\left(\frac{d}{2}(k+1) + 2\right)}{\Gamma\left(\frac{d}{2}+2\right)\Gamma\left(1+\frac{dk}{2}\right)} & \text{for even $d$}\\
     \displaystyle 2^{d+1}\frac{\Gamma\left(\frac{d}{2}(k+1)+\frac{3}{2}\right)}{\Gamma\left(\frac{d+3}{2}\right)\Gamma\left(1+\frac{dk}{2}\right)} & \text{for odd $d$}
    \end{cases}.
  \]
\end{lemma}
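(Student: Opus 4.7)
The plan is to split $B(x,y)$ according to the parity of its $y$-powers, extract the target coefficient in each case, and observe that both cases yield the same closed form, which then unpacks to the product in the statement. Writing $n = \lfloor d/2\rfloor$ for brevity, decompose $B = A_1 + A_2$ with
\[
A_1 = \frac{y(1-4xy^2)^{-1/2}}{1-y^2(1+4x)}, \qquad A_2 = \frac{y^2(1+4x)}{1-y^2(1+4x)}.
\]
Because $(1-4xy^2)^{-1/2}$ and $(1-y^2(1+4x))^{-1}$ are both even in $y$, $A_1$ is supported on the odd powers of $y$ and $A_2$ on the even ones, so at most one of them contributes to the coefficient of $y^{2n+2+dk}$, according to the parity of $2n+2+dk$.

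When $2n+2+dk$ is even (so $d$ is even, or $d$ is odd with $k$ even) only $A_2 = \sum_{j\geq 1}y^{2j}(1+4x)^j$ contributes; setting $j = (2n+2+dk)/2$ immediately yields $\binom{n+1+dk/2}{n+1}4^{n+1}$. When $2n+2+dk$ is odd (so $d$ and $k$ are both odd) only $A_1$ contributes, and expanding both series the target coefficient unfolds into $\sum_{m=0}^{n+1}\binom{2m}{m}\binom{L-m}{n+1-m}4^{n+1-m}$ with $L = d(k+1)/2$. The key step is to recognise this as the coefficient of $z^{n+1}$ in
\[
\sum_{m\geq 0}\binom{2m}{m}z^m(1+4z)^{L-m} = (1+4z)^L\left(1-\frac{4z}{1+4z}\right)^{-1/2} = (1+4z)^{L+1/2},
\]
which evaluates to $\binom{L+1/2}{n+1}4^{n+1}$. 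Since $L+1/2 = n+1+dk/2$ in this case as well, both parity cases collapse to the same closed form $\binom{n+1+dk/2}{n+1}4^{n+1}$.

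Expanding this binomial coefficient and absorbing the $4^{n+1}$ into its factors produces $\tfrac{1}{(n+1)!}\prod_{i=1}^{n+1}(2dk+4i)$, which is exactly the decreasing product in the statement. For the Gamma-function equality I would recast the product as the Pochhammer ratio $\Gamma(dk/2+n+2)/\Gamma(dk/2+1)$, recognise $dk/2+n+2$ as $d(k+1)/2+2$ when $d$ is even and as $d(k+1)/2+3/2$ when $d$ is odd, identify $(n+1)!$ with $\Gamma(d/2+2)$ or $\Gamma((d+3)/2)$ accordingly, and observe that $4^{n+1} = 2^{2n+2}$ furnishes the stated $2^{d+2}$ or $2^{d+1}$ scaling. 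The main obstacle is the hypergeometric-style identity in the odd-odd case, but once one notices $1 - 4z/(1+4z) = 1/(1+4z)$ the radical collapses and the remainder is routine power-series and Gamma-function bookkeeping.
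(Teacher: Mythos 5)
Your proposal is correct and follows essentially the same route as the paper: split $B(x,y)$ by the parity of the $y$-exponent, read the even-parity coefficient directly off $y^2(1+4x)/(1-y^2(1+4x))$ as $4^{n+1}\binom{n+1+dk/2}{n+1}$, and handle the odd-$d$, odd-$k$ case by collapsing the $(1-4xy^2)^{-1/2}$ factor against the geometric series into a single half-integer binomial power. The only cosmetic divergence is that the paper performs this collapse via the substitution $t=xy^2$, arriving at $[t^n](1-4t)^{-j-3/2}$, whereas you extract the convolution sum first and resum it to $(1+4z)^{L+1/2}$; both land on the same closed form, and your unified derivation of the product and Gamma identities from that single binomial coefficient is a mild streamlining of the paper's three-subcase (factorial and double-factorial) verification.
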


\begin{proof}
  It is convenient to write out the odd an even $d$ cases separately, so what we want to show is for even $d$
  \[
    [x^{\frac{d}{2}+1}y^{2+d+dk}]B(x,y) =  \frac{(2dk + 2d + 4)(2dk + 2d)\cdots (2dk+4)}{(\frac{d}{2} + 1)!}
     = 2^{d+2}\frac{\Gamma\left(\frac{d}{2}(k+1) + 2\right)}{\Gamma\left(\frac{d}{2}+2\right)\Gamma\left(1+\frac{dk}{2}\right)}
  \]
  and for odd $d$
  \[
    [x^{\frac{d+1}{2}}y^{1+d+dk}]B(x,y) =  \frac{(2dk + 2d + 2)(2dk + 2d-2)\cdots (2dk+4)}{(\frac{d+1}{2})!} 
     =2^{d+1}\frac{\Gamma\left(\frac{d}{2}(k+1)+\frac{3}{2}\right)}{\Gamma\left(\frac{d+3}{2}\right)\Gamma\left(1+\frac{dk}{2}\right)}.
  \]

  For even $d$, the arguments to the gamma functions are all integers so we can directly translate to factorials obtaining
  \begin{align*}
    2^{d+2}\frac{\Gamma\left(\frac{d}{2}(k+1) + 2\right)}{\Gamma\left(\frac{d}{2}+2\right)\Gamma\left(1+\frac{dk}{2}\right)} & = 2^{d+2}\binom{\frac{d}{2}k + \frac{d}{2} + 1}{\frac{d}{2}+1} \\
                                                                                                                             & = 2^{d+2}\frac{(\frac{d}{2}k + \frac{d}{2} + 1)(\frac{d}{2}k+\frac{d}{2}) \cdots (\frac{d}{2}k + 1)}{\frac{d}{2}+1} \\
    & = \frac{(2dk + 2d + 4)(2dk + 2d)\cdots (2dk+4)}{(\frac{d}{2} + 1)!}
  \end{align*}

  For odd $d$ and even $k$, the arguments to the gamma functions are again integers and so similarly we obtain
  \[
    2^{d+1}\frac{\Gamma\left(\frac{d}{2}(k+1)+\frac{3}{2}\right)}{\Gamma\left(\frac{d+3}{2}\right)\Gamma\left(1+\frac{dk}{2}\right)} = 2^{d+1}\binom{\frac{d+1+dk}{2}}{\frac{d+1}{2}}= \frac{(2dk + 2d + 2)(2dk + 2d-2)\cdots (2dk+4)}{(\frac{d+1}{2})!}.
  \]
  When $d$ and $k$ are both odd use the formula for the gamma function at half integers in terms of the double factorial: $\Gamma(\frac{1}{2}+n) = \frac{(2n-1)!!}{2^n}\sqrt{\pi}$.  With this we obtain for $d$ and $k$ both odd
  \begin{align*}
    2^{d+1}\frac{\Gamma\left(\frac{d}{2}(k+1)+\frac{3}{2}\right)}{\Gamma\left(\frac{d+3}{2}\right)\Gamma\left(1+\frac{dk}{2}\right)} & = \frac{2^{\frac{d+1}{2}}(dk+d+1)!!}{(\frac{d+1}{2})! (dk)!!}\\
                                                                                                                                     & = \frac{2^{\frac{d+1}{2}}(dk+d+1)(dk+d-1)\cdots(dk+2)}{(\frac{d+1}{2})!} \\
    & = \frac{(2dk + 2d + 2)(2dk + 2d-2)\cdots (2dk+4)}{(\frac{d+1}{2})!}.
  \end{align*}

  \medskip

  It remains now to do the coefficient extraction on $B(x,y)$.  Note that the only place an odd power of $y$ appears in $B(x,y)$ is in the term $y/\sqrt{1-4xy^2}$ in the numerator.  Consider first the coefficient for even powers of $y$.  For $i>0$, $n\geq 0$ we have
  \[
    [x^ny^{2i}]F(x,y) = [x^ny^{2i}]\frac{y^2(1+4x)}{1-y^2(1+4x)} = [x^n](1+4x)^i = 4^n\binom{i}{n}
  \]
  For $d$ even take $n=\frac{d}{2}+1$, $i=\frac{d+2+dk}{2}$ and so
  \[
    [x^{\frac{d}{2}+1}y^{d+2+dk}]F(x,y) = 4^{\frac{d}{2}+1}\binom{\frac{d+2+dk}{2}}{\frac{d}{2}+1} =  \frac{(2dk + 2d + 4)(2dk + 2d)\cdots (2dk+4)}{(\frac{d}{2} + 1)!}
  \]
  as desired,
  For odd $d$ and even $k$ take $n=\frac{d+1}{2}$, $i=\frac{d+1+dk}{2}$ and so
  \[
     [x^{\frac{d+1}{2}}y^{d+1+dk}]F(x,y) = 4^{\frac{d+1}{2}}\binom{\frac{d+1+dk}{2}}{\frac{d+1}{2}} =  \frac{(2dk + 2d + 2)(2dk + 2d-2)\cdots (2dk+4)}{(\frac{d+1}{2})!}.
   \]
   It remains to consider $d$ and $k$ both odd.
   In this case the relationship between the powers of $x$ and $y$ is important so let $t=xy^2$. Then for $n,i\geq 0$
   \begin{align*}
     [x^ny^{2i+1}]F(x,y) & = [x^ny^{2i}]\frac{1}{\sqrt{1-4xy^2}(1-y^2(1+4x))} \\
                         & = [t^ny^{2j}]\frac{1}{\sqrt{1-4t}(1-(y^2+4t))} \qquad  \text{ where $j=i-n$} \\
                         & = [t^n](1-4t)^{-\frac{1}{2}}\sum_{k\geq j}\binom{k}{j}(4t)^{k-j} \\
                         & = [t^n](1-4t)^{-\frac{1}{2}}\sum_{\ell \geq 0}\binom{\ell+j}{\ell}(4t)^{\ell} \\
                         & = [t^n](1-4t)^{-\frac{1}{2}}(1-4t)^{-j-1} \\
                         & = (-4)^n\binom{-j-\frac{3}{2}}{n} \\
     & = \frac{(4j+6)(4j+10)\cdots (4j+4n+2)}{n!}
   \end{align*}
   For $d$ and $k$ both odd we are interested in the case $n=\frac{d+1}{2}$ and $i = d+dk$ so $j=\frac{dk-1}{2}$, giving
   \[
     [x^{\frac{d+1}{2}}y^{d+1+dk}]F(x,y) = \frac{(2dk + 2d + 2)(2dk + 2d-2)\cdots (2dk+4)}{(\frac{d+1}{2})!}
   \]
   and completing the proof.
\end{proof}

What we have accomplished in this phase is to define some objects, specifically the partial chord diagrams of $\mathcal{B}$, which are counted by the gamma function factor in the expressions for $C_i^{(d)}$ after scaling by $2^{2\lfloor \frac{d}{2}\rfloor+2}$.  Note that we only make use of elements of $\mathcal{B}$ for certain choices of parameters, so it isn't accurate to say that $B(x,y)$ is the generating function for the gamma factors in the $C_i^{(d)}$ with appropriate scaling, since not all elements of $\mathcal{B}$ are used in this interpretation.

\subsection{Phase 2}
For phase 2, we now want an interpretation for the $C_i^{(d)}$.  Unfortunately, there are signs here.  In general subtraction does not have a combinatorial interpretation since it depends on the sets of objects counted by the numbers on either side of the subtraction sign having an appropriate containment relation.  Fortunately in this case, we can give a suitable interpretation where the containment makes sense.

Next note that in odd dimension the $C_i^{(d)}$ are not in general integers, so we will instead give an interpretation for $2^{2\lfloor\frac{d}{2}\rfloor +2}C_i^{(d)}$ and will comment at the end of this section on how we can remove one power of 2 in all cases and in \cref{sec even} on how we can remove all of the powers of two in even dimensions.

This phase is where we'll use the first ends of the red and blue chords, even though that information is redundant in almost all elements of $\mathcal{B}$.  Recall that for $b\in \mathcal{B}$, $F(b)$ is the set of first ends and this set inherits a total order.  Write $F(b)=\{e_1, e_2, \ldots\}$ in this order.
Given a red or blue chord $c$ of some $b\in \mathcal{B}$, say $c$ has \emph{width} $j$ if $c$ has $j-1$ black chords inside it. Write $w(e_j)$ for the width of the red or blue chord with first end $e_j$.  

\begin{definition}
Let $\mathcal{B}_{n,m,c,d}$ be the set of elements of $\mathcal{B}_{n,m}$ with fewer than $c$ consecutive bare points immediately preceding $e_j$ for all $j$ with $\sum_{\ell=1}^{j-1} w(e_\ell) < d$. 
\end{definition}

\Cref{fig B4} gives an example of the definition and the notions of the next two paragraphs.

In the main proof, we'll be inserting bare points before the first ends of red and blue chords.  If $c$ has width $j$ then $c$ has $j$ \emph{insertion places} before its first end.  We will view these $j$ insertion places as distinct, so for a chord of width 2, inserting three bare points into its first insertion place and inserting three bare points into its second insertion place will be viewed as different acts of insertion even though the resulting partial chord diagrams are the same.

With this notion of insertion place in hand, another way to phrase the condition defining $\mathcal{B}_{n,m,c,d}$ is that there are fewer than $c$ consecutive bare points immediately preceding each first end that corresponds to one of the the first $d$ insertion places.

  \begin{figure}
    \includegraphics{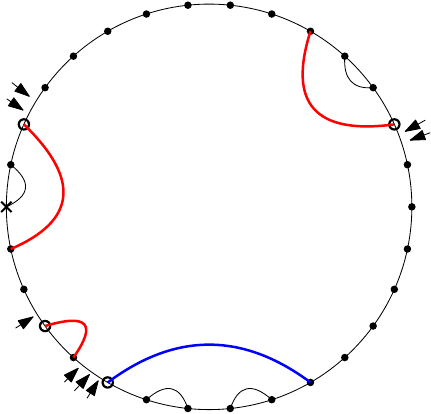}
    \caption{An example of an element of $\mathcal{B}_{8, 30, 2, 3}$.  The insertion places are marked by arrows and the width of each red or blue chords is the number of arrows immediately before its first end.  To begin with, we see that this diagram is an element of $\mathcal{B}_{8,30}$ since it has 30 points, 8 chords and satisfies the required conditions.  Saying further that it is in $\mathcal{B}_{8,30, 2, 3}$ means that the red width 1 chord and the blue chord both have fewer than $2$ free points immediately before them.  It is these two chords that we need to consider because the first three insertion places come before these chords, or equivalently because $w(e_1) = 1 < 3 \leq 4 = w(e_1)+w(e_2)$.}\label{fig B4}
  \end{figure}

\medskip

The $\mathcal{B}_{n,m,c,d}$ with appropriate parameters are the objects which the $2^{2\lfloor\frac{d}{2}\rfloor + 2}C_i^{(d)}$ count.
\begin{thm}\label{thm main thm}
  \[
    2^{2\lfloor\frac{d}{2}\rfloor + 2}C_i^{(d)} =(-1)^{i-1} |\mathcal{B}_{\lfloor\frac{d}{2}\rfloor + 1, 2\lfloor\frac{d}{2}\rfloor + 2 + d(i-1), d, i-1}|.
  \]
\end{thm}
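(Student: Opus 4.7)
The plan is to combine \cref{lem gamma part} with a standard inclusion--exclusion over insertion places. By \cref{lem gamma part}, the gamma ratio appearing in $C_i^{(d)}$, after scaling by $2^{2\lfloor d/2\rfloor + 2}$, equals $|\mathcal{B}_{n,\,2n+dk}|$ with $n = \lfloor d/2\rfloor + 1$. Substituting this into the defining sum of $C_i^{(d)}$ and reindexing $k = i-1-s$ reduces the theorem to the purely combinatorial identity
$$|\mathcal{B}_{n,\,2n+d(i-1),\,d,\,i-1}| \;=\; \sum_{s=0}^{i-1}\binom{i-1}{s}(-1)^{s}\,|\mathcal{B}_{n,\,2n+d(i-1-s)}|.$$

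To prove this identity I would interpret the right-hand side through an insertion/deletion bijection. Given a subset $S \subseteq \{1,\ldots,i-1\}$ of insertion-place labels and an element $b' \in \mathcal{B}_{n,\,2n+d(i-1-|S|)}$, I form $b \in \mathcal{B}_{n,\,2n+d(i-1)}$ by inserting $d$ consecutive bare points immediately before the first end that owns each insertion place in $S$, treating the several insertion places attached to a single chord as distinct labels. Inserting bare points preserves the number and colours of the chords, the noncrossing property, the widths, and the cumulative width sums that index insertion places, so the output is well defined and the operation is inverted unambiguously by removing $d$ bare points per marked insertion place. This sets up a sign-preserving bijection between the pairs $(S, b')$ contributing to the right-hand side and pairs $(S, b)$ with $b \in \mathcal{B}_{n,\,2n+d(i-1)}$ such that, for every $j \in S$, the first end attached to the $j$-th insertion place of $b$ is preceded by at least $d$ bare points.

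To finish, fix $b$ and let $T(b) \subseteq \{1,\ldots,i-1\}$ be the set of those insertion places whose attached first end in $b$ is preceded by at least $d$ bare points. The total contribution of $b$ is then $\sum_{S \subseteq T(b)}(-1)^{|S|}$, which equals $1$ when $T(b) = \emptyset$ and $0$ otherwise. The condition $T(b) = \emptyset$ is precisely the defining property of $\mathcal{B}_{n,\,2n+d(i-1),\,d,\,i-1}$, so the identity, and hence the theorem, follows.

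The main obstacle, and where I would spend most of the care, is checking that the insertion operation interacts cleanly with the ordered labelling of insertion places. One must verify that the $j$-th insertion place of $b'$ and the $j$-th insertion place of $b$ sit at the same first end, so that $S$ can be read back unambiguously from $(S,b)$; this relies on the fact that widths and the partial sums $\sum_{\ell < j} w(e_\ell)$ are unchanged by bare point insertion. One also needs the label range $\{1,\ldots,i-1\}$ to be populated in every relevant diagram, which follows from the identity $\sum_\ell w(e_\ell) = n$ (each red or blue chord of width $w$ contributes $w-1$ interior black chords, so the total width equals the total number of chords) together with the inequality $i-1 \le n = \lfloor d/2\rfloor + 1$ in the range of interest.
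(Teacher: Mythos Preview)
Your overall plan matches the paper's: apply \cref{lem gamma part} to reduce to the combinatorial identity, then interpret the binomial as a choice of insertion places and realise the alternating sum via insertion of $d$ bare points, finishing with an inclusion--exclusion cancellation. The packaging differs only cosmetically (you use labelled subsets $S$ where the paper uses multisets $M_{b,k}$).

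There is, however, a genuine gap in your cancellation step. Your characterisation of the image of the insertion map---``for every $j\in S$, the first end attached to the $j$-th insertion place of $b$ is preceded by at least $d$ bare points''---is too weak when a red or blue chord has width at least $2$, because several of the labels $1,\dots,i-1$ can be attached to the \emph{same} first end. If two labels $j_1,j_2\in S$ share a first end $e$ and $e$ is preceded by exactly $d$ bare points in $b$, your condition is satisfied, but the inverse (remove $2d$ bare points from before $e$) is impossible; so $(S,b)$ is \emph{not} in the image. Concretely, for $d=4$, $i=3$ take $b\in\mathcal{B}_{3,14}$ with one red chord of width $2$ (so both insertion places $1,2$ lie before $e_1$), a second coloured chord of width $1$, and exactly $4$ bare points immediately before $e_1$: then the valid $S$'s are $\emptyset,\{1\},\{2\}$ and the signed contribution of $b$ is $1-1-1=-1$, not $0$, even though $b\notin\mathcal{B}_{3,14,4,2}$. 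Thus your per-$b$ sum $\sum_{S\subseteq T(b)}(-1)^{|S|}$ is not the quantity the inclusion--exclusion actually produces. This is precisely the subtlety that forces the paper to work with multisets and to track multiplicities $\prod_j\binom{w_j}{i_j}$ explicitly, passing to a minimal preimage and summing products of partial alternating binomial sums rather than a single $\sum_{S\subseteq T}(-1)^{|S|}$. Your final paragraph correctly flags the stability of insertion-place labels and the bound $i-1\le n$, but it misses this multiplicity issue; to repair the argument you need to replace the condition ``$\ge d$ bare points'' by ``$\ge rd$ bare points whenever $r$ labels of $S$ share the first end $e$'' and then redo the cancellation along the lines the paper uses.
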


\begin{proof}
  Let's first pull out the sign in the expression for $C_i^{(d)}$, do a trivial rewriting of the binomial coefficients so that they're exactly in the form in which we'll interpret them, and apply \cref{lem gamma part}.
  \begin{align*}
     2^{2\lfloor\frac{d}{2}\rfloor + 2}C_i^{(d)} & = \begin{cases}
    2^{2\lfloor\frac{d}{2}\rfloor + 2}(-1)^{i-1}\displaystyle\sum_{k=0}^{i-1}\binom{i-1}{k}(-1)^{k-i+1} \frac{\Gamma\left(\frac{d}{2}(k+1) + 2\right)}{\Gamma\left(\frac{d}{2}+2\right)\Gamma\left(1+\frac{dk}{2}\right)} & \text{for even $d$}\\
    2^{2\lfloor\frac{d}{2}\rfloor + 2}(-1)^{i-1}\displaystyle\sum_{k=0}^{i-1}\binom{i-1}{k}(-1)^{k-i+1} \frac{\Gamma\left(\frac{d}{2}(k+1)+\frac{3}{2}\right)}{\Gamma\left(\frac{d+3}{2}\right)\Gamma\left(1+\frac{dk}{2}\right)} & \text{for odd $d$}
  \end{cases}\\
    & =(-1)^{i-1}\displaystyle\sum_{k=0}^{i-1}\binom{i-1}{i-1-k}(-1)^{k-i+1} |\mathcal{B}_{\lfloor \frac{d}{2} \rfloor + 1, 2\lfloor \frac{d}{2}\rfloor +2 + dk}|
  \end{align*}
  So what we need to prove is
  \[
    |\mathcal{B}_{\lfloor\frac{d}{2}\rfloor + 1, 2\lfloor\frac{d}{2}\rfloor + 2 + d(i-1), d, i-1}| = \displaystyle\sum_{k=0}^{i-1}\binom{i-1}{i-1-k}(-1)^{k-i+1} |\mathcal{B}_{\lfloor \frac{d}{2} \rfloor + 1, 2\lfloor \frac{d}{2}\rfloor +2 + dk}|
  \]
  To that end, fix $d$ and $i$, and consider an element $b\in \mathcal{B}_{\lfloor \frac{d}{2} \rfloor + 1, 2\lfloor \frac{d}{2}\rfloor +2 + dk}$ for some $0\leq k \leq i-1$. % We will need some language for certain features of $b$.
  %Given a red or blue chord $c$ of $b$, say $c$ has \emph{width} $j$ if $c$ has $j-1$ black chords inside it.
%  We'll be inserting bare vertices before the first ends of red and blue chords.  If $c$ has width $j$ then $c$ has $j$ \emph{insertion places} before its first end.  We will view these $j$ insertion places as distinct, so for a chord of width 2, inserting three bare points into its first insertion place and inserting three bare vertices into its second insertion place will be viewed as different acts of insertion even though the resulting partial chord diagrams are the same.

  Observe that the sum of the widths of all red and blue chords is just the total number of chords of all three colours, so $b$ has $\lfloor \frac{d}{2} \rfloor + 1$ insertion places total.  Consider the multiset $M_{b,k}$ of partial chord diagrams resulting from choosing $i-1-k$ of the first $i-1$ insertion places of $b$ and inserting $d$ bare points into each of those insertion places.  Each way of inserting adds a total of $d(i-1-k)$ points to $b$ so each element of $M_{b,k}$ is an element of $\mathcal{B}_{\lfloor\frac{d}{2}\rfloor + 1, 2\lfloor\frac{d}{2}\rfloor + 2 + dk + d(i-1-k)} = \mathcal{B}_{\lfloor\frac{d}{2}\rfloor + 1, 2\lfloor\frac{d}{2}\rfloor + 2 + d(i-1)}$.

  Thus we have
  \[
    \displaystyle\sum_{k=0}^{i-1}\binom{i-1}{i-1-k}(-1)^{k-i+1} |\mathcal{B}_{\lfloor \frac{d}{2} \rfloor + 1, 2\lfloor \frac{d}{2}\rfloor +2 + dk}|
    = \displaystyle\sum_{k=0}^{i-1}(-1)^{k-i+1}  |M_{k}|,
  \]
  where
  \[
    M_k =  \biguplus_{b\in\mathcal{B}_{\lfloor \frac{d}{2} \rfloor + 1, 2\lfloor \frac{d}{2}\rfloor +2 + dk}}M_{b,k}
  \]
  is the multiset union of the $M_{b,k}$.  Note that all the $M_k$ are multisets whose elements are elements of the set $\mathcal{B}_{\lfloor\frac{d}{2}\rfloor + 1, 2\lfloor\frac{d}{2}\rfloor + 2 + d(i-1)}$.

  With these insertions all our objects live in the correct set. We only need to check that the signs and multiplicities conspire to cancel all elements of  $\mathcal{B}_{\lfloor\frac{d}{2}\rfloor + 1, 2\lfloor\frac{d}{2}\rfloor + 2 + d(i-1)}$ which are not in $\mathcal{B}_{\lfloor\frac{d}{2}\rfloor + 1, 2\lfloor\frac{d}{2}\rfloor + 2 + d(i-1), d, i-1}$ and to give all elements that are in $\mathcal{B}_{\lfloor\frac{d}{2}\rfloor + 1, 2\lfloor\frac{d}{2}\rfloor + 2 + d(i-1), d, i-1}$ exactly once.

  The second part is simpler to check.  All the insertions were of $d$ bare points into $i-1-k$ of the first $i-1$ insertion places, so the only way to obtain an element of $\mathcal{B}_{\lfloor\frac{d}{2}\rfloor + 1, 2\lfloor\frac{d}{2}\rfloor + 2 + d(i-1), d, i-1}$ is from the $k=i-1$ term (where there is no insertion) and that term is $(-1)^0|\mathcal{B}_{\lfloor\frac{d}{2}\rfloor + 1, 2\lfloor\frac{d}{2}\rfloor + 2 + d(i-1)}|$ which contributes each element of $\mathcal{B}_{\lfloor\frac{d}{2}\rfloor + 1, 2\lfloor\frac{d}{2}\rfloor + 2 + d(i-1), d, i-1}$ exactly once.

  For the first part take $c\in \mathcal{B}_{\lfloor\frac{d}{2}\rfloor + 1, 2\lfloor\frac{d}{2}\rfloor + 2 + d(i-1)}$, $c\not\in\mathcal{B}_{\lfloor\frac{d}{2}\rfloor + 1, 2\lfloor\frac{d}{2}\rfloor + 2 + d(i-1), d, i-1}$.  By definition $c$ has $d$ or more bare points before at least one of its first $i-1$ insertion places, so $c$ appears in at least one $M_{k}$ for some $k<i-1$.  Let $\ell$ be the minimum $k$ for which $c$ appears and take $b\in  \mathcal{B}_{\lfloor \frac{d}{2} \rfloor + 1, 2\lfloor \frac{d}{2}\rfloor +2 + d\ell}$ so that $c\in M_{b,\ell}$.

  Now, we claim that $b$ is the unique $b\in \mathcal{B}_{\lfloor \frac{d}{2} \rfloor + 1, 2\lfloor \frac{d}{2}\rfloor +2 + d\ell}$ so that $c\in M_{b,\ell}$. If not then suppose $b$ and $b'$ were two such partial chord diagrams, then $b$ and $b'$ differ only in their bare points, so build $b''$ so as to agree with $b$ and $b'$ except that whenever $b$ and $b'$ have a different number of bare points before a given red or blue chord, take the minimum of these numbers as the number of bare points before this chord in $b''$.   In whichever of $b$ or $b'$ achieved the minimum at this point, these insertions were all valid, so doing so across $b''$ we get a valid insertion pattern giving $c$ but $b''$ is smaller than $b$ or $b'$ contradicting the minimality of $\ell$ and proving the claim.

  Write the first ends of $b$ in their inherited order, $F(b)=\{e_1, e_2, \ldots\}$ and let $w_j$ be the number of the first $i-1$ insertion places of $b$ that are before $e_j$.  Then for some $t$,  $w_1, w_2, \ldots, w_{t-1}$ are the widths of their respective chords, $0<w_t$ is bounded above by the width of its chord, and $w_{t+1}=w_{t+2} = \cdots = 0$.  Let $i_1, \ldots, i_t$ be the number of insertion places before $e_1, \ldots, e_t$ respectively that must be chosen in order to obtain $c$ from $b$.  Thus the multiplicity of $c$ in $M_{b, \ell}$ is
  \[
    \prod_{j=1}^{t}\binom{w_j}{i_j}.
  \]
  Now for each $1\leq j\leq t$ and any $0\leq i_j'\leq i_j$ we can add $d(i_j-i_j')$ base points to the partial chord diagram immediately before $e_j$ giving a larger partial chord diagram for which $i_j'$ insertion places before $e_j$ still need to be inserted into in order to obtain $c$.  Every such choice is counted separately in the multisets $M_k$ since it comes from a different pair of a chord diagram and a pattern of insertions and every appearance of $c$ in the $M_k$ occurs in this way.
  The total signed contribution of these copies of $c$ is
  \[
    \sum_{\substack{0\leq i_1' \leq i_1 \\\cdots \\ 0 \leq i_t'\leq i_t}}\prod_{j=1}^{t}\binom{w_j}{i_j'}(-1)^{i_1'+i_2'+\cdots+i_t'}
     = \prod_{j=1}^{t} \sum_{0\leq i_j' \leq i_j}\binom{w_j}{i_j'}(-1)^{i_j'} \\
     = 0
  \]
  by the binomial theorem, which completes the proof of the result.

\end{proof}

It would be nice not to have the initial power of $2$ multiplying the $C_i^{(d)}$.  In general, we do need some powers of $2$ since the $C_i^{(d)}$ do sometimes have denominators.  In the next section we'll look at removing the powers of 2 in the even dimensional case, but first observe that since every element of $\mathcal{B}$ has at least one red or blue chord, at least one power of $2$ is easy to remove in all cases.

\begin{cor}
  Let $\widetilde{\mathcal{B}}_{n,m,a,b}$ be $\mathcal{B}_{n,m,a,b}$ where we identify partial chord diagrams that differ only by swapping all red chords for blue chords and vice versa.  Then
  \[
    2^{2\lfloor\frac{d}{2}\rfloor + 1}C_i^{(d)} =(-1)^{i-1} |\widetilde{\mathcal{B}}_{\lfloor\frac{d}{2}\rfloor + 1, 2\lfloor\frac{d}{2}\rfloor + 2 + d(i-1), d, i-1}|.
    \]
\end{cor}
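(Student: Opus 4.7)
The plan is to deduce this corollary directly from \cref{thm main thm} by observing that the global colour swap (exchange every red chord with a blue chord and vice versa) is a fixed-point-free involution on $\mathcal{B}_{n,m,a,b}$ whenever there is at least one red or blue chord present. Under the parameters appearing in the corollary, $n = \lfloor d/2\rfloor + 1 \geq 1$, so every element contains at least one chord; moreover, by the defining property of $\mathcal{B}$ that black chords must lie inside a red or blue chord, any element with $n \geq 1$ necessarily has at least one red or blue chord.

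First I would verify that the global swap is a well-defined involution on $\mathcal{B}_{n,m,a,b}$: the colour swap preserves the underlying set partition, preserves the noncrossing property, preserves the designated first ends, and preserves the condition on consecutive bare points before first ends (since that condition is colour-blind). Hence it descends to a map on $\mathcal{B}_{n,m,a,b}$, and squaring it is the identity. Next I would argue it has no fixed points in the relevant parameter range: a fixed point would require every chord coloured red to also be blue, which is impossible as soon as any chord of $\mathcal{B}$ is coloured red or blue.

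Consequently, the quotient has exactly half the cardinality:
\[
|\widetilde{\mathcal{B}}_{\lfloor d/2\rfloor+1,\, 2\lfloor d/2\rfloor+2+d(i-1),\, d,\, i-1}|
= \tfrac{1}{2}\,|\mathcal{B}_{\lfloor d/2\rfloor+1,\, 2\lfloor d/2\rfloor+2+d(i-1),\, d,\, i-1}|.
\]
Combining this with \cref{thm main thm}, which gives
\[
2^{2\lfloor d/2\rfloor + 2} C_i^{(d)} = (-1)^{i-1}\,|\mathcal{B}_{\lfloor d/2\rfloor+1,\, 2\lfloor d/2\rfloor+2+d(i-1),\, d,\, i-1}|,
\]
dividing both sides by $2$ yields the stated identity.

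There is no real obstacle here: the entire content is the observation that the colour-swap involution is fixed-point-free. The only thing to be careful about is the range of $d$ for which $n \geq 1$ (which holds for all $d \geq 0$ of physical interest here), and checking that the insertion-count condition defining $\mathcal{B}_{n,m,a,b}$ is invariant under the colour swap, both of which are immediate from the definitions.
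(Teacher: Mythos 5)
Your proof is correct and matches the paper's (largely implicit) justification: the paper gives no formal proof of this corollary, only the preceding remark that ``every element of $\mathcal{B}$ has at least one red or blue chord,'' which is exactly the fixed-point-freeness of the colour-swap involution that you spell out. Your additional checks (that the swap preserves the defining conditions of $\mathcal{B}_{n,m,a,b}$ and that $n=\lfloor d/2\rfloor+1\geq 1$ guarantees a red or blue chord) are the right details to make the halving argument rigorous.
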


\section{Simplifications in even dimension}\label{sec even}

In even dimension the number of points in the partial chord diagram is even.  In this situation things can be simplified and the power of 2 can be absorbed.  We'll use the following map from elements of $\mathcal{B}$ to binary strings.

\begin{definition}
  Given $b\in \mathcal{B}_{n,2j}$, define $s(b)$ to be the binary string of length $j$ whose $\ell$th entry is $0$ if point $2\ell-1$ is bare in $b$ and $1$ otherwise.
\end{definition}

For example, with $b$ the diagram in \cref{fig B object}, $s(b)$ is $111110$.  The first bit is a $1$ because the root is not a bare point, neither is the third point (which has the first end of a black chord on it).  In fact the only odd-indexed  bare point is the last one, giving the $0$ at the end of $s(b)$.  For another example, see \cref{fig schematic}.

Clearly the map $s$ loses a lot of information about $b$. The next claim is that $s$ is a $4^n$-to-$1$ map, where $n$ is the number of chords, and hence will absorb the power of 2 that we had in the previous section.

\begin{lemma}\label{lem remove the 2s}
  For $b\in \mathcal{B}_{n,2j}$, $s(b)$ has exactly $n$ $1$s.
  
  For every binary string $w$ of length $j$ with $n$ $1$s, $|s^{-1}(w)| = 4^n$.
\end{lemma}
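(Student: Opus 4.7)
The plan is to prove the two claims separately. For the first claim, I would proceed by induction on the nesting depth of chords: given a chord $c$ in $b \in \mathcal{B}_{n, 2j}$, its inside contains only endpoints of black chords, each of which (by the inductive hypothesis) contributes one odd and one even endpoint, so the inside holds $2k$ points for some $k \geq 0$. The two endpoints of $c$ are then separated by $2k+1$ positions along the inside arc, an odd number; and since $m = 2j$ is even the same parity conclusion holds whether the inside is the direct arc or the wrap-around arc. So the two endpoints of $c$ have opposite parities, and summing over the $n$ chords shows that $s(b)$ has exactly $n$ ones.

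For the second claim, I would first extract the total size from \cref{lem gen series}: the even-in-$y$ part of $B(x, y)$ reduces to $y^2(1+4x)/(1-y^2(1+4x)) = \sum_{j \geq 1} y^{2j}(1+4x)^j$, giving $|\mathcal{B}_{n, 2j}| = [x^n y^{2j}] B(x, y) = 4^n \binom{j}{n}$. Since by the first claim $s$ maps surjectively onto the $\binom{j}{n}$-element set of length-$j$ binary strings with $n$ ones, the statement $|s^{-1}(w)| = 4^n$ reduces to showing that all fibers of $s$ have the same cardinality.

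To establish fiber uniformity, my plan is to construct an explicit bijection $s^{-1}(w) \leftrightarrow \{0,1,2,3\}^n$ guided by the $(1+4x)^j$ factorization, in which the $\ell$th factor corresponds to the atomic pair $\{2\ell-1, 2\ell\}$ and contributes $1$ when both points are bare and $4$ when a chord is ``anchored'' there. Each chord would receive a $4$-ary label encoding $2$ color choices times $2$ first-end/orientation choices, with inner black chords receiving virtual labels reflecting their nesting in an outer chord; the reconstruction of $b$ from $(w, \text{labels})$ would proceed by processing the atomic pairs in order. The hardest part will be that this decomposition does not factor chord-by-chord in an obvious way: an outer red/blue chord in $\mathcal{B}$ carries $4$ intrinsic choices while an inner black chord carries only $1$, so the bijection must reversibly redistribute the missing choices along the nesting tree. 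As a fallback I would consider showing uniformity instead by constructing, for each adjacent-swap $w \leftrightarrow w'$, a local bijection on fibers realized by a surgery near the swapped positions; the challenge there will be coherently handling ``inverted'' first-end configurations where a chord's inside wraps around the root.
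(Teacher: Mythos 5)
Your proof of the first claim is fine; it amounts to reproving, by induction on nesting depth, the standard fact that every chord of a noncrossing diagram on an evenly many points joins an odd position to an even position, which is exactly the fact the paper invokes. The issue is the second claim. Your reduction is sound as far as it goes: from \cref{lem gen series} one does get $|\mathcal{B}_{n,2j}|=4^n\binom{j}{n}$, and if every fiber of $s$ over a weight-$n$ string had the same cardinality, the count $4^n$ would follow by division. But fiber uniformity is the entire content of the lemma, and you have not proved it --- you have only named two strategies and, to your credit, identified exactly why each one is problematic. The proposed bijection $s^{-1}(w)\leftrightarrow\{0,1,2,3\}^n$ via ``$2$ colours $\times$ $2$ orientations per chord'' fails chord-by-chord, since black chords carry one colour and no orientation; the needed ``redistribution along the nesting tree'' is not specified, and the adjacent-swap fallback is likewise not carried out. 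As written this is a genuine gap, not a proof.

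For comparison, the paper avoids fiber uniformity as a separate step. It decomposes $b$ according to the maximal blocks of $1$s in $s(b)$: each block of $t$ consecutive $1$s (when $s(b)$ contains a $0$) corresponds to a bare point, a sequence of indecomposable red/blue-rooted noncrossing pieces anchored on even positions, a bare point, a second such sequence anchored on odd positions, and a bare point. The generating function for such a configuration is $\bigl(1-2xC(x)\bigr)^{-2}-1=\frac{4x}{1-4x}$, whose coefficient of $x^t$ is exactly $4^t$; multiplying over blocks gives $4^n$ preimages for each string, with the all-$1$s string handled by a separate cyclic-pointing argument that yields the same series. If you want to salvage your approach, the cleanest route is essentially to adopt this block decomposition: it is precisely the structure that makes the count factor as a product of $4$s, whereas a per-chord labelling does not exist in any natural way.
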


\begin{proof}
  For the first point, note that noncrossing complete rooted chord diagrams have every chord with one even indexed end and one odd indexed end.  Since $b$ is on an even number of points, the parity of the indices of the points is well defined cyclically.  Further $b$ is made of a cyclic sequence of complete noncrossing chord diagrams and bare points and hence the indexing of each complete noncrossing segment is changed by a constant shift.  Thus $b$ also has every chord with one even indexed end and one odd indexed end.  Therefore every chord of $b$ has exactly one end that becomes a $1$ under $s$ and no other $1$s occur in $s(b)$ giving that $s(b)$ has exactly $n$ $1$s.

  For the second part, consider how a block of $1$s can appear in $s(b)$.  Suppose first that there is at least one $0$ in $s(b)$ and consider the blocks in $s(b)$ cyclically.  Take a block of $1$s in $s(b)$.  There is some $0$ before the block, corresponding to a bare point in $b$, then in $b$ there is a sequence of indecomposable noncrossing chord diagrams with the outer chord red or blue and where the indexed points are in the even positions, then another bare point, then a sequence of indecomposable noncrossing chord diagrams with the outer chord red or blue and where the indexed points are in the odd positions, then another bare point, as illustrated in \cref{fig schematic}.  Either sequence of indecomposable diagrams may be empty, but both may not be empty. The first and last bare point may be the same point as we are working cyclically.  Each block of 1s where $s(b)$ contains at least one $0$ appears in this way, and each such configuration in $b$ gives a block of $1$s.

  \begin{figure}
    \includegraphics{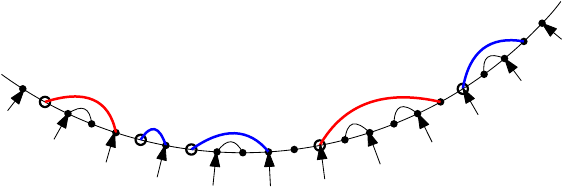}
    \caption{An example of the sequence of indecomposable pieces making up a block of $1$s in $s(b)$.  The arrows indicate the points which directly contribute bits to $s(b)$.  Note that each chord has one end which contributes a bit to $s(b)$. Furthermore that the sequence of indecomposable pieces has exactly one internal bare point (which necessarily is not contributing a bit) separating a sequence of indecomposable pieces where the contributing ends are in the even positions from a sequence of indecomposable pieces where the contributing ends are in the odd positions.  This will always happen, potentially with the sequence on one side of the internal bare point being empty.
    The portion of $s(b)$ given by the part of the chord diagram shown in the figure is $011111111110$ (with ten $1$s in the block of $1$s).}\label{fig schematic}
  \end{figure}
  
  Again using $C(x)$ for the generating function for Catalan numbers, by standard generating function arguments we get that the generating function for a block of $1$s is
  \[
    \left(\frac{1}{1-2xC(x)}\right)\left(\frac{1}{1-2xC(x)}\right) -1 =
      \frac{1}{\sqrt{1-4x}}\frac{1}{\sqrt{1-4x}}-1 = \frac{4x}{1-4x}.
    \]
    Extracting coefficients we get that every block of $t$ $1$s occurs $4^t$ times, so arguing similarly for each block of $1$s, we get that each binary string $w$ with at least one $0$ occurs $4^n$ times, as desired.

    It remains to consider the case that $s(b)$ has no $0$s.  In this case we simply have a cyclic sequence of indecomposable noncrossing chord diagrams with the outer chord coloured red or blue.  As in \cref{lem gen series} we can obtain the generating series for these, by choosing a point on one indecomposable diagram as the root and then taking a sequence of others.  Note that since $x$ counts the chords and each chord has two points, choosing a point is done by the operator $2x\frac{d}{dx}$ in this case.  This gives the generating function
    \[
      \frac{2x\frac{d}{dx}2xC(x)}{1-2xC(x)} = \frac{4x}{\sqrt{1-4x}}\frac{1}{\sqrt{1-4x}} = \frac{4x}{1-4x}
    \]
    giving this binary string also occurs $4^n$ times and completing the proof.
\end{proof}

The previous lemma tells us that in the even dimensional case we can directly reinterpret the phase 1 numbers, that is the ratio of gamma functions appearing in the $C_i^{(d)}$ for even $d$, as the number of binary strings of length $d/2+1+dk/2$ and with $d/2+1$ $1$s.  We could also see that fact more directly by writing the ratio of gamma functions as a binomial coefficient in this case, but the lemma gives us the direct connection between the phase 1 construction and these strings, hence allowing us to carry over everything additional we know from the phase 1 construction.

From here we can redo the phase 2 argument adding $0$s and checking the cancellation occurs.  There is one important detail.  Since we no longer know how each block of $1$s came from a sequence of indecomposable noncrossing chord diagrams, we no longer know where the first ends of the red or blue chords were.  However, we can now simply take an insertion place before every $1$.  The same argument for the cancellation holds, but now is even easier since we no longer need to worry about multiplicities from insertions into different places giving the same result.  Taking this all together we obtain the following theorem.

\begin{thm}
  For $d$ even $(-1)^{i-1}C_i^{(d)}$ is the number of binary strings of length $\frac{d}{2}+1+\frac{d(i-1)}{2}$ with $\frac{d}{2}+1$ 1s and where there are fewer than $\frac{d}{2}$ $0$s immediately before any of the first $i-1$ $1$s.
\end{thm}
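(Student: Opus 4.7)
The strategy is to run the phase 2 cancellation argument of \cref{thm main thm} directly on binary strings, using \cref{lem remove the 2s} to absorb the power of $2$ at the outset.

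First I would set up the translation. For even $d$ we have $2^{2\lfloor d/2\rfloor+2}=4^{d/2+1}=4^n$ where $n=d/2+1$ is the number of chords, so the $4^n$-to-$1$ nature of the map $s$ from \cref{lem remove the 2s} exactly cancels the prefactor in \cref{thm main thm}. To pass the result through $s$ I need the constraint defining $\mathcal{B}_{n,m,d,i-1}$ to depend on $b$ only through $s(b)$. This should follow from the observation that inserting $d$ consecutive bare points into $b$ immediately before a first end shifts a run of $d/2$ consecutive zeros into $s(b)$ immediately before the corresponding $1$ (since $d$ is even and the parity of bare-point indices is tracked by $s$), so ``fewer than $d$ bare points before a first end'' translates to ``fewer than $d/2$ zeros before a $1$''. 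Thus $(-1)^{i-1}C_i^{(d)}$ counts binary strings of length $d/2+1+d(i-1)/2$ with $d/2+1$ ones arising as $s(b)$ for some $b$ in the constrained set.

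Next I would redo phase 2 intrinsically on strings. Define an \emph{insertion place} of a string to be any position immediately preceding a $1$; a phase 1 string of parameter $k$ (length $d/2+1+dk/2$, with $d/2+1$ ones) thus has exactly $n$ insertion places. For each $k$ form the multiset $M_k$ as the union, over phase 1 strings $w$, of all strings obtained from $w$ by choosing $i-1-k$ of the first $i-1$ insertion places of $w$ and inserting a block of $d/2$ zeros at each chosen place. By \cref{lem gamma part} combined with the translation of the previous paragraph, $(-1)^{i-1}C_i^{(d)}$ rewrites as the signed sum $\sum_{k=0}^{i-1}(-1)^{k-i+1}|M_k|$, all of whose terms live in the set of binary strings of length $d/2+1+d(i-1)/2$ with $d/2+1$ ones.

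The cancellation then follows from a clean binomial-theorem argument. The $k=i-1$ term contributes each string satisfying the stated constraint exactly once with sign $+1$. For any unconstrained string $c$, the minimal preimage obtained by stripping off all complete blocks of $d/2$ zeros immediately before each of the first $i-1$ ones of $c$ is \emph{unique}, because each $1$ of a string has exactly one insertion place associated to it --- in contrast to the chord diagram setting, where a single chord may offer several insertion places. If $j_1,\ldots,j_{i-1}$ are the numbers of strippable blocks before the first $i-1$ ones, then the total signed contribution of $c$ across all $M_k$ factors as
\[
\prod_{\ell=1}^{i-1}\sum_{j_\ell'=0}^{j_\ell}(-1)^{j_\ell'}\binom{1}{j_\ell'},
\]
and any factor with $j_\ell\geq 1$ vanishes as $1-1=0$, so the whole product is $0$ as soon as $c$ violates the constraint. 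The main obstacle I expect is the first step: checking carefully that the constraint descends to strings under $s$, and in particular that the notion of ``first $i-1$ insertion places'' in the chord diagram picture is compatible after applying $s$ with the ``first $i-1$ ones'' in the string picture. Once that bridge is built, the rest is noticeably simpler than \cref{thm main thm} precisely because of the one-insertion-place-per-$1$ simplification.
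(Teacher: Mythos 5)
Your second and third paragraphs are exactly the paper's proof: \cref{lem remove the 2s} turns the phase 1 counts of \cref{lem gamma part} into counts of binary strings (the factor $4^n=2^{d+2}$ cancelling the prefactor of \cref{thm main thm}), and the phase 2 cancellation is then rerun intrinsically on strings with one insertion place before each $1$, where uniqueness of preimages is automatic because each $1$ owns a single insertion place. That part is correct, and it is the whole proof.

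The bridge you single out as the main obstacle, however, cannot be built, and you should delete it rather than try to prove it: the constraint defining $\mathcal{B}_{n,m,d,i-1}$ does \emph{not} descend through $s$. One reason is a parity mismatch: the chord-diagram constraint allows up to $d-1$ consecutive bare points before a first end, and a run of $d-1$ consecutive points ($d-1$ being odd) contains either $\frac{d}{2}$ or $\frac{d}{2}-1$ odd-indexed points depending on its alignment, so an allowed diagram can produce $\frac{d}{2}$ consecutive $0$s in $s(b)$, which the string constraint forbids. A second reason is that the $1$ recorded by $s$ for a red or blue chord sits at its odd-indexed end, which need not be the first end, and when the chord contains black chords the $0$s coming from bare points before the first end are adjacent in $s(b)$ to a $1$ coming from a black chord rather than to ``the corresponding $1$''; this is precisely the paper's remark that after applying $s$ one no longer knows where the first ends were. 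So the constrained chord diagrams and the constrained strings are not matched by $s$; only their cardinalities agree, and that equality is the content of the theorem, established by your string-level cancellation. Fortunately nothing in your paragraphs 2--3 uses the descent claim: all you need from $s$ is that the fibers over \emph{unconstrained} strings have size $4^n$, which is \cref{lem remove the 2s}. Excise the claim and your argument is the paper's.
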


\begin{proof}
  Use \cref{lem remove the 2s} and then argue as the the proof of \cref{thm main thm} with insertion places before the first $i-1$ $1$s and with insertions of $\frac{d}{2}$ $0$s.
\end{proof}

These objects are arguably mildly more natural if we interpret the binary strings as lattice paths starting at the origin and where $0$ becomes a step one unit up and $1$ becomes a step one unit right.  Then rephrasing we obtain the following corollary.

\begin{cor}
  For $d$ even $(-1)^{i-1}C_i^{(d)}$ is the number of lattice walks from $(0,0)$ to $(\frac{d}{2}+1, \frac{d(i-1)}{2})$ and using steps $\rightarrow$ and $\uparrow$ with the additional property that there are fewer than $\frac{d}{2}$ up steps with $x$-coordinate $j$ for $0\leq j \leq i-2$.
\end{cor}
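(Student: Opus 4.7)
The plan is to describe explicitly the bijection between binary strings and lattice paths that the statement is built around, verify it is a bijection onto the intended target set, and then translate the phase 2 constraint coordinate by coordinate. Everything is essentially a repackaging of the preceding theorem, so there is no serious analytical obstacle; the only care is with indexing.

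First I would set up the correspondence. Given a binary string $w$ of length $\frac{d}{2}+1 + \frac{d(i-1)}{2}$ containing exactly $\frac{d}{2}+1$ ones, read it from left to right and send each $1$ to a step $\rightarrow$ and each $0$ to a step $\uparrow$, starting at the origin. Since there are $\frac{d}{2}+1$ right steps and $\frac{d(i-1)}{2}$ up steps, such a walk terminates at $\bigl(\frac{d}{2}+1, \frac{d(i-1)}{2}\bigr)$, and the map is clearly a bijection onto lattice walks from $(0,0)$ to this point using the steps $\rightarrow$ and $\uparrow$.

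Next I would translate the constraint. After reading any prefix of $w$, the current $x$-coordinate equals the number of $1$s that have appeared so far. Therefore the $j$-th $1$ of $w$ is precisely the right step that moves the walk from $x$-coordinate $j-1$ to $x$-coordinate $j$, and the $0$s immediately preceding the $j$-th $1$ (those sitting between the $(j-1)$-th and $j$-th ones, with none before the first $1$) are exactly the up steps lying on the vertical line $x = j-1$. Thus the condition from the preceding theorem, that fewer than $\frac{d}{2}$ zeros appear immediately before each of the first $i-1$ ones, translates verbatim into the condition that fewer than $\frac{d}{2}$ up steps occur at $x$-coordinate $j$ for each $0 \leq j \leq i-2$.

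With the bijection and the translated constraint in hand, the corollary is immediate from the previous theorem. The only subtle point is the off-by-one in identifying the $j$-th $1$ with $x$-coordinate $j-1$ for its departure vertex and $j$ for the arriving vertex; this is pure bookkeeping and does not represent a real obstacle.
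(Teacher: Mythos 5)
Your proposal is correct and is exactly the paper's argument: the paper proves the corollary by the same rephrasing (read $1$ as $\rightarrow$ and $0$ as $\uparrow$), merely stating it in one line where you spell out the endpoint count and the off-by-one bookkeeping identifying the $0$s before the $j$-th $1$ with up steps at $x$-coordinate $j-1$. You also correctly silently repair the statement's typos (the sign should be $(-1)^{i-1}$ and the endpoint's $y$-coordinate should be $\frac{d(i-1)}{2}$), consistent with the preceding theorem.
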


Many other interpretations would be similarly possible.

\section{Discussion}

We gave a combinatorial interpretation that captures the cancellations in the expression for the coefficients of the causal set d'Alembertian given in \cite{Gdalembertian}.  While this does not obviously yield a closed form expression, hopefully the reader finds this more satisfying hence in some sense an answer to Glaser's request for a more aesthetically satisfying expression.  Combinatorial interpretations can be potentially useful even when they don't give nice formulas, as they can show structure that may not be apparent otherwise and are well suited to proving identities or properties of the counts by combinatorial means.

From a combinatorial perspective it is tempting to rewrite the formulas for $B^{(d)}$ by pulling out a factor of $\beta_d$ giving
\[
B^{(d)}\phi(x) = \frac{\beta_d}{\ell^2} \left(\frac{\alpha_d}{\beta_d} \phi(x) + \sum_{i=1}^{\lfloor \frac{d}{2} \rfloor + 2 }c_i^{(d)}\sum_{y\in L_i}\phi(y)\right).
\]
We know \cite{Gdalembertian} that
\begin{align*}
  \frac{\alpha_d}{\beta_d} & = \begin{cases} -\frac{\Gamma(d)}{\Gamma(\frac{d}{2}+2)\Gamma(\frac{d}{2})} & \text{for even $d$} \\
    -\frac{2^{d-1}}{d+1} & \text{for odd $d$} \end{cases} \\
  & = \begin{cases} -\frac{1}{2}\mathsf{Cat}_{\frac{d}{2}} & \text{for even $d$} \\
    -\frac{2^{d-1}}{d+1} & \text{for odd $d$} \end{cases}
\end{align*}
where $\mathsf{Cat}_n$ is the $n$th Catalan number.
This suggests some $i=0$ version of our objects which are counted by Catalan numbers in the even dimensional case and by $\frac{1}{d+1}$ in the odd dimensional case, with scaling by powers of $2$ as necessary.    It is not clear how to fit this in to the picture described above.  Likely an alternate interpretation would be needed.

Finally, one may ask what physical use these interpretations are.  While they do not seem to be immediately so useful, they do have some physically nice properties.  First of all, by the very nature of combinatorial interpretations, they explain the cancellations between the terms in the original alternating sum form of the coefficients.  Second, these interpretations are unified in form across both even and odd dimension.  This is physically appealing because ultimately one would like to have a way of understanding a causal set action without knowing the dimension in advance, and so as a first step a physically useful interpretation should at least not take an entirely different form across dimensions of different parities.  Furthermore, while the dimension does figure into these interpretations in an important way, determining the number of chords, number of points, and constraining the number of bare points before certain first ends, at least we are counting the same kind of thing in each dimension, with only these parameters changing with the dimension, so how the behaviour changes with $d$ is reasonably well controlled.

Since these are the coefficients of the number of intervals of size $i$, what would be even better than the interpretations developed here, would be some way to reinterpret our particular classes of chord diagrams so that they count the number of ways to do some specific thing on the intervals, or so that there's some way to decorate the causal set so that each interval is decorated the number of ways given by these chord diagrams.  In this way, the d'Alembertian would become simply an alternating sum, without coefficients, of the number of such things on each interval or the number of such decorated intervals, and hence, in a more profound sense, would be a natural and intrinsic property of the causal set or decorated causal set, without needing to rely on magic coefficients.

\bibliographystyle{plain}
\bibliography{main}

\end{document}